\def\rad{{\rm {rad }}}
\newtheorem{theorem}{Theorem}
\newtheorem{proposition}[theorem]{Proposition}
\newtheorem{conjecture}[theorem]{Conjecture}
\title{A Note on a Result of Makowski}
\author{Luis H. Gallardo\footnote{Univ. Brest, Laboratoire de Math\'ematiques de Bretagne Atlantique, Brest, France, luishge11@gmail.com}, Joshua Zelinsky\footnote{Department of Mathematics, Hopkins School, New Haven, CT, USA,  zelinsky@gmail.com}}
\date{}
\begin{document}

\maketitle
\begin{abstract}
In this note, we fix a gap in a proof of the first author that 28 is the only even perfect number which is the sum of two perfect cubes. We also discuss the situation for higher powers.
\end{abstract}

In this note, we fix a gap in a proof of the first author. Makowksi  \cite{Makowski} proved that 28 is the only even perfect number of the form $x^3+1$. Motivated by this, the first author \cite{Gallardo} stated that if $n$ is an even perfect number, and $n=x^3 + a^3$ for some positive integers $x$ and $a$, then we must have $n=28$.\footnote{That the cubes are positive is needed here. A referee of this paper pointed out that the prior paper also implicitly used that the cubes were positive without stating explicitly. If one does not insist on the cubes being positive, one has other examples such as $8128=  28^3 - 24^3$.} However, there is a step in that proof that has a gap. In particular, if $n$ is an even perfect number, then we may factor $n$ as \begin{equation}\label{basic factorization}
n= x^3 + a^3 = (x+a)(x^2 - ax+a^2)
\end{equation}
and, with some work, reach a contradiction. However, the contradiction reached  required assuming that $(x+a, x^2 - ax + a^2) =1$, which is not necessarily the case. The main purpose of this note to is to address this situation. We then discuss representing even perfect numbers as the sum of two higher powers.

We recall the Euclid-Euler theorem, which states that $n$ is an even perfect number if and only if there is a prime $2^p-1$ such that 
\begin{equation}\label{Euclid-Euler theorem}
n = (2^p-1)2^{p-1}.
\end{equation}

Note that if $2^p-1$ is prime, then $p$ is prime but the converse does not follow. 

We will, following  the first author's prior work \cite{Gallardo}, factor $n$ as in Equation \ref{basic factorization}. We will also assume that there is a non-trivial common factor between $x+a$ and $x^2-ax+a^2$. Note that since the only prime factor repeated in the factorization of $n$ is $2$, it must be the case that our common factor is a power of 2. 

We first want to show that $2^p-1$ is not a factor of $x+a$.  Assume that $2(2^p-1)|x+a$.  We consider then two cases. In case I, $2(2^p-1)=x+a$. In case II, $4(2^p-1)|x+a$. \\

Case I: Assume that 
\begin{equation} 2(2^p-1)=x+a \label{2(2p -1) = x+a}.\end{equation}
Then we have
\begin{equation} 2^{p-2} = x^2-ax+a^2 .
\label{2 (p-2) = x2 -ax -a2}
\end{equation}

Then we may combine equation Equation \ref{2(2p -1) = x+a} and Equation \ref{2 (p-2) = x2 -ax -a2} to get that

$$\frac{x+a+2}{8} = x^2 -ax +a^2$$

which has a negative discriminant if $a \geq 1$, and thus has no solution in positive integers. We may thus assume that we are in Case II where $$4(2^p-1)|x+a.$$ Thus we have that $2^p -1 \leq \frac{x+a}{4}$,
and so 

$$x^3+a^3 = (2^p-1)2^{p-1} \leq (\frac{x+a}{4}) (\frac{x+a}{8} + \frac{1}{2}) $$

which is a contradiction when $x \geq 1$ and $a \geq 1$.

We thus may assume that $2^p-1|x^2-ax+x^2$. For the remainder of the proof, we will assume that \begin{equation} x+a= 2^h \label{x+a=2 to h}
\end{equation}
and 
\begin{equation} x^2 - ax +a^2 = (2^p-1)2^{p-1-h} \label{x2 -ax +a2}.
\end{equation}
We have then
\begin{equation}
    2^{2h} = (x+a)^2 = x^2 +2ax + a^2 > x^2 -ax +a^2 \label{x+a squared inequality}.
\end{equation}
Thus, \begin{equation} 2^{2h} > (2^p-1)2^{p-1-h}\end{equation}
which implies that $2^{3h} > 2^{2p-1} - 2^{p-1}$ and hence that 
\begin{equation}3h \geq 2p-1. \end{equation}

We also have that

\begin{equation} 2^{2h} = (x+a)^2 \leq 4(x^2-ax +a^2) = 4(2^p-1)2^{p-1-h} < 2^{2p+1}2^{-h}. \label{2h upper bound}
\end{equation}
We have from the above chain of inequalities that $3h \leq 2p$. Thus, we have either $3h = 2p-1$ or $3h = 2p$. If $3h=2p$, then $p=3$ and and thus we have $n=28 = 3^3 +1^3$. We thus need only to consider the situation where $3h=2p-1$.\\

Note that we must have $p \equiv 2$ (mod 3). We set $p=3q+2$. \\
So \begin{equation} h= \frac{6q+3}{3} = 2q+1.
\end{equation}
We then may apply Equations \ref{x+a=2 to h} and \ref{x2 -ax +a2} to get that 
\begin{equation} x+a = 2^{2q+1}= 2(2^{2q}) \label{x+a in terms of q}
\end{equation}
and 
\begin{equation} x^2 -ax+a^2 = (2^{3q+2}-1)(2^{3q+1-(2q+1)}) = 2^{4q+2} - 2^q. \label{x2 -ax + a2 in terms of q}
\end{equation}
If we set $N=2^q$, we get from Equation \ref{x+a in terms of q} and Equation \ref{x2 -ax + a2 in terms of q} that 

\begin{equation}\label{x+a= 2N squared} x+a=2N^2
\end{equation}
and 
\begin{equation}\label{x2 -ax +a2 = (2N) squared - N} x^2 -ax +a^2 = 4N^4 - N.
\end{equation}
Thus, we have 

$$x^2 -ax + a^2 = (x+a)^2 - \sqrt{\frac{x+a}{2}},$$ but this equation does not have any integer solutions. The proof is now complete.\\










We strongly suspect the following holds:

\begin{conjecture} If $n$ is an even perfect number, and $n =x^m +y^m$ where $x$ and $a$ are positive integers and $m$ is a positive integer which is at least 2, $m \geq 2$, then we must have $m=3$, and $n=28$. 
\end{conjecture}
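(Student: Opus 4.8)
\medskip

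We outline an approach that proves the conjecture when $m$ is even or divisible by $3$, and that reduces the remaining cases to a single Diophantine equation which we have not been able to resolve in general. The first step is to reduce to a prime exponent. If $m$ is even, say $m=2k$, then $n=(x^k)^2+(y^k)^2$ is a sum of two squares; writing $n=(2^p-1)2^{p-1}$ as in the Euclid--Euler theorem and noting that $p\ge2$ forces $2^p-1\equiv3\pmod4$, we see that the prime $2^p-1$ divides $n$ to the (odd) first power, so $n$ is not a sum of two squares --- a contradiction. Hence $m$ is odd. If in addition $3\mid m$, then $n=(x^{m/3})^3+(y^{m/3})^3$, and the theorem proved above gives $n=28=1^3+3^3$, so $\{x^{m/3},y^{m/3}\}=\{1,3\}$; since $3$ is not a perfect $k$-th power for any $k\ge2$, this is possible only if $m=3$, and then $n=28$. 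There remains the case of odd $m$ with $3\nmid m$ and $m\ge5$; such an $m$ has a prime factor $q\ge5$, and since $n=(x^{m/q})^q+(y^{m/q})^q$, it suffices to prove that \emph{no even perfect number is a sum of two $q$-th powers of positive integers for any prime $q\ge5$.}

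To this end, fix a prime $q\ge5$ and suppose $x^q+y^q=(2^p-1)2^{p-1}$. Let $d=\gcd(x,y)$. Since $d^q\mid n$ and $2$ is the only prime occurring in $n$ to a power greater than $1$, we get $d=2^t$ for some $t\ge0$ with $tq\le p-1$; setting $x=2^tu$, $y=2^tv$ with $\gcd(u,v)=1$ reduces the equation to $u^q+v^q=2^{e}(2^p-1)$, where $e=p-1-tq\ge0$. Using $u^q+v^q=(u+v)\cdot\frac{u^q+v^q}{u+v}$ together with the elementary bound $\frac{a^q+b^q}{a+b}\ge2$, valid whenever $a+b\ge3$, one disposes of the easy cases: if $u$ and $v$ have opposite parity then $u^q+v^q$ is odd, so $e=0$, $u+v$ divides the prime $2^p-1$, and (as $u+v\ge3$) $u+v=2^p-1$, whence $\frac{u^q+v^q}{u+v}=1$, which is impossible; so $u$ and $v$ are both odd. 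The lifting-the-exponent lemma then gives $v_2(u+v)=v_2(u^q+v^q)=e$, where $v_2$ denotes the $2$-adic valuation, and, since $\frac{u^q+v^q}{u+v}$ is a positive integer, the odd part of $u+v$ divides the prime $2^p-1$, which (just as before) forces $u+v=2^e$, $e\ge2$, and $\frac{u^q+v^q}{u+v}=2^p-1$. Now the power-mean inequality yields $2^{(e-1)(q-1)}\le2^p-1\le(u+v)^{q-1}=2^{e(q-1)}$, which combined with the identity $p=e+1+tq$ confines $e$ to a closed interval of length $1$; hence for each pair $(q,t)$ there are at most two admissible values of $e$, and so at most two admissible $p$. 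Reducing, finally, the congruence $\frac{u^q+v^q}{u+v}\equiv qu^{q-1}\pmod{u+v}$ modulo $4$ (which is legitimate because $e\ge2$) forces $q\equiv3\pmod4$, and reductions modulo higher powers of $2$ restrict $q$ still further; in particular the primes $q=5,13,17$ are excluded outright.

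After these reductions, a counterexample would yield the Diophantine equation
\[
u^q+(2^e-u)^q=2^e(2^p-1),\qquad u\ \text{odd},\ 0<u<2^e,\ \ p=e+1+tq\ \text{prime with }2^p-1\ \text{prime};
\]
ruling this out for all admissible $(q,e,t,p)$ is the step we have not been able to carry through. When $q=3$, the substitution $v=2^e-u$ turns the equation into the assertion that a certain quadratic in $u$ has two positive integer roots, which fails on size grounds --- this is essentially the computation that completes the proof above. For $q\ge5$ the left-hand side is a polynomial of degree $q-1\ge4$ in $u$ with the free parameter $e$, and we know of no elementary way to exclude solutions for every such tuple. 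The difficulty is genuine: although the interval bound pins $e$ down to at most two values for each pair $(q,t)$, the parameter $t$ is unbounded, so even a single prime $q$ presents infinitely many cases, and the $2$-adic congruences restrict the residue classes of $q$ but not the range of $t$. We expect that settling the conjecture will require new input --- for instance effective lower bounds for $\lvert u^q+(2^e-u)^q-2^e(2^p-1)\rvert$ in the style of Baker's method, or structural results on which values of $\frac{x^q+y^q}{x+y}$ can be Mersenne primes.
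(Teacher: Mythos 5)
The statement you are addressing is stated in the paper only as a conjecture: the paper offers no proof of it, just three pieces of motivation (the $m=3$ theorem, the observation that no even perfect number is a sum of two squares, and a density heuristic for $m\geq 5$), a worked explanation of why the $m=3$ method stalls at $m=5$, and a separate theorem that is conditional on an explicit ABC-type hypothesis. Your proposal is likewise not a proof, and you say so yourself: after reducing to prime exponents $q\geq 5$ and running the $2$-adic argument, every prime $q\equiv 3\pmod 4$ survives (so $m=7,11,19,23,\dots$ remain untouched), and the residual Diophantine equation is left unresolved. Relative to the statement as posed, that is a genuine gap, and repackaging the surviving cases as a single equation in $(u,q,e,t,p)$ does not close it.

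That said, the portion of your argument that does go through is correct and goes strictly beyond what the paper records, so it is worth separating out as an unconditional theorem. Your even-$m$ and $3\mid m$ reductions coincide with the paper's motivating remarks (sum of two squares with $2^p-1\equiv 3\pmod 4$ to the first power; reduce to the $m=3$ theorem and note that $3$ is not a perfect power). What is new is the case $q\equiv 1\pmod 4$: extracting $\gcd(x,y)=2^t$, using lifting-the-exponent to force $u+v=2^e$ with $e\geq 2$ and $\frac{u^q+v^q}{u+v}=2^p-1$, and then comparing $\frac{u^q+v^q}{u+v}\equiv qu^{q-1}\equiv q\pmod 4$ with $2^p-1\equiv 3\pmod 4$ eliminates every such prime, and in particular settles $m=5$ --- exactly the case the paper singles out as resisting its method, since there the authors only obtain the two-sided bounds $p-1\leq 3h\leq 2p-2$ on the analogue of your $e$ and stop. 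I checked this chain and it holds; one small slip is that your interval for $e$ has length $\frac{q-2}{q-1}<1$ and so contains at most one integer, not two, which only strengthens your claim. Present the $q\equiv 1\pmod 4$ result (together with even $m$ and $3\mid m$) as a theorem and the primes $q\equiv 3\pmod 4$, $q\geq 7$ as open; as written, the proposal cannot be accepted as a proof of the conjecture.
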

This conjecture is motivated by three things. First, we we know it is true for $m=3$ by the earlier part of this note. Second, it is true for any even $m$, since every Mersenne prime is 3 (mod 4), and thus no even perfect number is the sum of two perfect squares. Third, for any $m \geq 5$, the set of numbers of the form $x^m + a^m$ has a very low density.   

However, trying to apply the method above to prove this seems insufficient even for the case of $m=5$. To see where this breaks down, let us assume that we have an even perfect number where $m=5$ and try to follow through the same line of logic we had above.  

Let us assume that 
\begin{equation}\label{n = x5 y5} n = x^5 + a^5\end{equation}
for positive integers, $x$ and $a$. As before, we may write 

\begin{equation}n = 2^{p-1}(2^p-1).\end{equation}
We have from Equation \ref{n = x5 y5},

\begin{equation} n= (x + a)\left(x^{4} - x^{3}a + x^{2}a^2  - xa^3 + a^{4} \right).
\end{equation}

We will write $A=x+a$, and $B=\left(x^{4} - x^{3}a + x^{2}a^2  -xa^3 + a^{4} \right)$.
 Since $x$ and $a$ are not both equal to 1, we get 
\begin{equation}
    2(x+a)^3 \leq x^5 + a^5 
\end{equation}
and so 
\begin{equation}2A^2 < B.\label{2A squared < B}\end{equation}

We thus have that $2^p-1|B$, since $2^p-1$ is prime, $2^p-1|AB$, and if we had $2^p-1|A$ we would be forced to violate Inequality \ref{2A squared < B}. We thus have a situation similar to what we had before, where

\begin{equation}A= 2^h\label{A = 2 to h},\end{equation}
and \begin{equation}B = 2^{p-1-h}(2^p-1) \label{B in terms of h}.
\end{equation}
We  have then from Inequalities \ref{2A squared < B}, \ref{A = 2 to h}, \ref{B in terms of h} that
\begin{equation} 3h \leq 2p-2. 
\end{equation}
We are thus in a situation very similar to our $m=3$ situation. But we need to get a lower bound on $h$ as well, and now we run into a problem. The best we seem to be able to do is
\begin{equation} A^4 \geq B,
\end{equation}
and this only gives us after simplifying 
\begin{equation} 3h \geq p-1.
\end{equation}
And this is too wide a range of possible $h$ values for our earlier strategy to work without some additional insight. 

We can show, subject to an explicit version of the ABC conjecture, that we can rule out even perfect numbers which are the sum of two non-trivial powers of high degree with certain properties.

\begin{conjecture}\label{Baker's Conjecture} Let $A$, $B$, $C$ be positive integers such that $A+B=C$, and $(A,B)=1$. Then \begin{equation}
\label{abcBaker2}
\max(A,B,C) < \rad(ABC)^{7/4}.
\end{equation}
\end{conjecture}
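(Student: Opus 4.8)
\emph{The plan.} The inequality \eqref{abcBaker2} is an explicit, effective form of the \emph{abc} conjecture, asserting a fixed polynomial bound $\max(A,B,C)<\rad(ABC)^{7/4}$ in place of the familiar statement that for every $\varepsilon>0$ one has $\max(A,B,C)\ll_{\varepsilon}\rad(ABC)^{1+\varepsilon}$. Since even this qualitative form is open, I do not expect an unconditional proof; the realistic plan is to \emph{reduce} \eqref{abcBaker2} to the sharper quantitative conjecture of Baker, and to pin down exactly where an unconditional argument would have to begin.

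First I would normalise, writing $N=\rad(ABC)$ and letting $\omega=\omega(N)$ be the number of distinct primes dividing $N$. Baker's refined conjecture predicts
\begin{equation}
\max(A,B,C)\le \tfrac{6}{5}\,N\,\frac{(\log N)^{\omega}}{\omega!}.
\end{equation}
Granting this, the task reduces to the elementary inequality $\dfrac{(\log N)^{\omega}}{\omega!}<\dfrac{5}{6}\,N^{3/4}$. The decisive feature is that $\omega$ cannot be large for a given $N$: as $N$ is squarefree and has $\omega$ distinct prime factors, $N\ge \prod_{i=1}^{\omega}p_i$, the product of the first $\omega$ primes, so that $\omega=O(\log N/\log\log N)$. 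On this admissible range I would estimate $(\log N)^{\omega}/\omega!$ by Stirling's formula; its maximum there is $N^{o(1)}$, whence the right-hand side above is $N^{1+o(1)}$, comfortably below $N^{7/4}$ once $N$ exceeds an explicit bound. The finitely many remaining pairs $(N,\omega)$ would then be dispatched by direct computation. This would establish \eqref{abcBaker2} conditionally on Baker's finer conjecture.

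The hard part is the unconditional step, and here I expect the argument to stall. The strongest unconditional results, obtained by Stewart and Yu from the theory of linear forms in logarithms, yield only a bound of the shape $\log\max(A,B,C)\ll \rad(ABC)^{1/3+\varepsilon}$, which is exponentially weaker than any polynomial bound such as \eqref{abcBaker2}. Reaching a fixed exponent is precisely the content of the \emph{abc} conjecture, and would seem to require genuinely new input---for instance an effective Vojta- or Szpiro-type height inequality for the Frey curve attached to the triple $(A,B,C)$---rather than a refinement of the present toolkit. I therefore regard the conditional reduction as the attainable goal, with the passage from effective logarithmic-forms estimates to a polynomial exponent as the genuine obstacle.
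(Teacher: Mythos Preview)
The statement you are addressing is labeled in the paper as a \emph{conjecture}, not a theorem, and the paper makes no attempt whatsoever to prove it. It is simply stated as an explicit form of the \emph{abc} conjecture (attributed implicitly to Baker) and then \emph{assumed} as a hypothesis in the subsequent theorem. There is therefore no ``paper's own proof'' to compare your proposal against.

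Your write-up correctly identifies that \eqref{abcBaker2} is an explicit \emph{abc}-type inequality and that no unconditional proof is available; your conditional reduction to Baker's refined $\frac{6}{5}N(\log N)^{\omega}/\omega!$ bound is a reasonable exercise, and your assessment that Stewart--Yu type results fall exponentially short is accurate. But none of this constitutes a proof, and you acknowledge as much. The honest summary is: the statement is a conjecture, the paper treats it as such, and your proposal is not a proof but rather a (sound) explanation of why a proof is currently out of reach. If the assignment was to supply a proof, the correct response is that none exists in the literature and the paper does not claim one.
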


Note that Conjecture \ref{Baker's Conjecture} follows from Baker's explicit version of the ABC conjecture.\cite{LaishramShorey}

Then we have the following:

\begin{proposition}
    
\label{ABCapplied}
Assume that Conjecture 2 holds. Assume that $2^p-1$ is prime. Then there is no simultaneous solution in positive integers satisfying the following list of conditions: 
(a) $m > 29$, \\
(b) $(x+a)^{m-2} \leq x^m+a^m$,\\
(c) Both $x$ and $a$ are odd,\\
(d) $x+a = 2^h$,\\
(e) $\frac{x^m+a^m}{x+a} = (2^p-1) \cdot 2^{p-1-h}$.\\

\end{proposition}
    
\begin{proof}
    
Assume Conjecture 2 (ABC  with exponent $7/4$) holds. Let $p$ be  a prime number such that $2^p-1$ is also prime. Moreover, assume that all conditions  (a),(b),(c),(d) and (e) hold. Note that we may assume that $p>3$.\\
Put $A= x^m$, $B=a^m$, and $C=(2^p-1) \cdot 2^{p-1}$.\\
Note that  (d) and (e) imply that
 \begin{equation}
\label{zeta}
2^{p-1} \cdot (2^p-1) = x^m+a^m. 
\end{equation}

By conditions (c) and (d) we have
\begin{equation}
\label{zero}
\gcd(A,B)=1.
\end{equation}
Thus Conjecture 2 implies that
\begin{equation}
\label{uno}
(2^p-1) \cdot 2^{p-1} < \left(2 \cdot \textrm{rad}(a x) \cdot (2^p-1) \right)^{7/4}
\end{equation}
From \eqref{uno} raised to the $4$-th power we get
\begin{equation}
\label{dos}
(2^p-1)^4 \cdot 2^{4 p-4} < 2^7 \cdot  \textrm{rad}(a x)^7 \cdot (2^p-1)^7.
\end{equation}
Thus, \eqref{dos} implies
\begin{equation}
\label{tres}
2^{4 p-11} < \textrm{rad}(a x)^7 \cdot (2^p-1)^3.
\end{equation}
But since
$4 a x  \leq (x+a)^2$, and
 $\textrm{rad}(a x) \leq a x$, it follows from (d) that
\begin{equation}
\label{seis}
\textrm{rad}(a x)  \leq 2^{2 h -2}.
\end{equation}
Thus, \eqref{tres} implies
\begin{equation}
\label{siete}
2^{4 p-11} <  2^{14 h -14} \cdot (2^p - 1)^3 < 2^{14 h -14} \cdot 2^{3 p}.
\end{equation}
We deduce from \eqref{siete} that
\begin{equation}
\label{ocho}
2^{p-11} <  2^{14 h -14}.
\end{equation}

Therefore, we have
\begin{equation}
\label{diez}
14 h -14 \geq p - 9
\end{equation}
since $p$ is odd.
Inequality \eqref{diez} implies that
\begin{equation}
\label{onze}
h \geq \frac{p +5}{14}.
\end{equation}
Now, we will find an upper bound for $h$. 
Using (b), (d), and (e) we deduce that
\begin{equation}
\label{trece}
2^{h \cdot(m-2)} \leq 2^h \cdot (2^p -1) \cdot  2^{p -1 -h} \leq 2^h \cdot 2^p \cdot 2^{p -1 -h}.
\end{equation}
Inequality \eqref{trece} divided by $2^h$ becomes:
\begin{equation}
\label{catorce}
2^{h \cdot(m-3)} \leq 2^{2 p -1 -h} .
\end{equation}
From \eqref{catorce} we get
\begin{equation}
\label{quince}
2 p -1 - h \geq h \cdot(m-2) - h.
\end{equation}
In other words \eqref{quince} says that
\begin{equation}
\label{dieciseis}
2 p -1  \geq h \cdot(m-2).
\end{equation}
And \eqref{dieciseis} can be also written as
\begin{equation}
\label{diecisiete}
h \leq \frac{2 p -1}{m-2}.
\end{equation}
Putting together the lower bound \eqref{onze} of $h$ with the upper bound \eqref{diecisiete} of $h$ we get
\begin{equation}
\label{dieciocho}
\frac{p+5}{14} \leq \frac{2 p -1}{m-2},
\end{equation}
which is equivalent to
\begin{equation}
\label{diecinueve}
m- 2 \leq f(p),
\end{equation}
where
\begin{equation}
\label{veinte}
f(p) := \frac{14 \cdot (2 p-1)}{p+5} =\frac{-154}{p+5} +28 . 
\end{equation}
It follows from \eqref{diecinueve} and \eqref{veinte} that
\begin{equation}
\label{veintidos}
m \leq f(p)+2 = 30 -  \frac{154}{p+5}.
\end{equation}
Then Inequality \eqref{veintidos} contradicts condition (a) and the result follows.

\end{proof}

\section*{Acknowledgments}
The authors acknowledge anonymous Wikipedia editor at IP address 219.78.80.30 who brought the gap to the authors' attention. We also acknowledge the many improvements made by the referee's careful attention to detail.


\begin{thebibliography}{99}
\bibitem{Gallardo}L. H. Gallardo,  On a remark of Makowski about perfect numbers, \textit{Elem. Math}. \textbf{65}, (2010) 121–126. \bibitem{LaishramShorey} S. Laishram, T. N. Shorey, Baker's explicit abc-conjecture and applications, \textit{Acta Arith.} \textbf{155},  (2012), 419-429.
\bibitem{Makowski} A. Makowski,  Remark on perfect numbers. Elem. Math. \textbf{17 }(1962), 109. 
\end{thebibliography}
\end{document}